\newcommand{\PreserveBackslash}[1]{\let\temp=\\#1\let\\=\temp}
\newcolumntype{C}[1]{>{\PreserveBackslash\centering}p{#1}}
\newcolumntype{R}[1]{>{\PreserveBackslash\raggedleft}p{#1}}
\newcolumntype{L}[1]{>{\PreserveBackslash\raggedright}p{#1}}
\newtheorem{prop}{Proposition}
\begin{document}
%
\title{Alternating Strategies Are Good For \\ Low-Rank Matrix Reconstruction}


\author{Kezhi~Li, 
        Martin Sundin, 
        Cristian~R.~Rojas, 
        Saikat Chatterjee, 
        Magnus Jansson
\thanks{K. Li, M. Sundin, C. R. Rojas, S. Chatterjee and M. Jansson are with the with the
ACCESS Linnaeus Center, Electrical Engineering, KTH – Royal
Institute of Technology, S-100 44 Stockholm, Sweden. (e-mail: kezhi@kth.se, masundi@kth.se, crro@kth.se, sach@kth.se, janssonm@kth.se). This work was partially supported by the Swedish Research Council under contract 621-2011-5847 and by the ACCESS Linnaeus Centre. }
}

%

\maketitle

\maketitle
\begin{abstract}
This article focuses on the problem of reconstructing low-rank matrices from underdetermined measurements using alternating optimization strategies. We endeavour to combine an alternating least-squares based estimation strategy with ideas from the alternating direction method of multipliers (ADMM) to recover structured low-rank matrices, such as Hankel structure. We show that merging these two alternating strategies leads to a better performance than the existing alternating least squares (ALS) strategy. The performance is evaluated via numerical simulations.


\end{abstract}

\begin{keywords}Low-rank matrix reconstruction, alternating strategies, least squares, ADMM.
\end{keywords}

\section{Introduction}

The low-rank matrix reconstruction problem arises naturally in many fields, such as system identification \cite{Fazel-Hankel,Jansson-Subspace}, spectral imaging \cite{Signoretto-Tensor} and audio signal processing \cite{Arberet-Reverberant}. Suppose an $r$-rank matrix $\mathbf{X}$ has size $n_1 \times n_2$, $r \ll \min(n_1, n_2)$, the objective is to recover $\mathbf{X}$ from the measurement equation
\begin{equation}\label{eq:y=AX+e}
\mathbf{y} = \mathcal{A}(\mathbf{X})+ \mathbf{e},
\end{equation}
where $\mathbf{y}\in \mathbb{R}^{m\times 1}$ is the measurement vector, $\mathcal{A}$ denotes a known sensing function $\mathbb{R}^{n_1n_2} \rightarrow \mathbb{R}^{m \times 1}$, and $\mathbf{e}$ is assumed to be zero-mean noise with given covariance $E(\mathbf{e}\mathbf{e}^T)=\mathbf{C} \in \mathbb{R}^{m \times m}$. Usually $m<n_1\times n_2$ thus the number of coefficients of $\mathbf{X}$ is larger than the number of measurements, so (\ref{eq:y=AX+e}) is underdetermined.
Specifically we consider the case where $\mathcal{A}$ is a linear operator so that (\ref{eq:y=AX+e}) can be rewritten equivalently as the product of an operator $\mathbf{A}$ and a low-rank $\mathbf{X}$
\begin{equation}
\mathcal{A}(\mathbf{X}) =  \mathbf{A} \text{vec}(\mathbf{X}),
\end{equation}
where $\mathbf{A}\in \mathbb{R}^{m \times n_1n_2}$. 


Compared to nuclear norm minimization \cite{RECHT-Guaranteed}, an \emph{alternating technique} solution provides faster computation, high accuracy and hence is useful for solving such underdetermined problems based on different criteria such as the maximum likelihood (ML) estimator \cite{Tang-LowerBounds} or the least squares (LS) estimator \cite{Zachariah-ALSforLR}. Typically alternating approaches provide locally optimal solutions through iterations. Each iteration leads to the best solution of a set of variables given another set of known variables found in the previous iteration.
Here our hypothesis is that if the \emph{updating directions} of previous iterations are also considered in each iteration, the reconstruction will be improved and accelerated because the potential feasible set of solutions is narrowed in each iteration.
In this regard, few relevant attempts have been made in matrix completion \cite{Wen-SolvingA}, or to update solutions using a gradient descent method \cite{Becker-randomizedLow, Lee-PracticalLarge}.

 In this paper we develop two algorithms and both of them are based on the alternating technique. First we simply modify the conventional alternating least squares (ALS) algorithm proposed in \cite{Zachariah-ALSforLR} to derive a new algorithm called alternating linear estimator (ALE) for low rank matrices with linear structure. Then based on the ALS and ALE, we develop a novel algorithm called alternating direction least squares (ADLS) that endeavours to validate our hypothesis on updating directions by fusing two alternating strategies. It utilizes the alternating strategy with the help of an updating direction for structured matrix reconstruction. Inspired by the ALS, the proposed framework is based on running the LS estimation to update the low rank component matrices $\mathbf{L}$, $\mathbf{R}$ and $\mathbf{X}$ iteratively, where $\mathbf{LR}= \mathbf{X}$. In our new approach, to incorporate direction update knowledge, the new $\mathbf{L},\mathbf{R}$ are calculated by solving optimization problems involving the augmented Lagrangian. This method is able to push variables converging to solutions more efficiently, as shown in the standard alternating direction method of multipliers (ADMM) \cite{Boyd-DistributedOpt, Wen-SolvingA}. The new algorithm also inherits the capability of ALS of handling structured matrices, such as Hankel structure. 
 The simulation results are compared with the classical ALS performance and Cram\'{e}r-Rao bounds (CRBs) to show the effectiveness of the proposed approach.


{\emph{Notations}}: Bold letters are used to denote a vector or a matrix. For vectors, $||\cdot||_1, ||\cdot||_2, ||\cdot||_{\infty}$ represent the $l_1, l_2$ and $l_{\infty}$ norms, respectively. For matrices, $\mathbf{A}^T$ and $\mathbf{A}^{\dagger}$ denote the transpose and Moore-Penrose pseudoinverse of $\mathbf{A}$. Moreover, $||\cdot||_F$ represents the Frobenius norm and $||\mathbf{x}||_\mathbf{W} \triangleq \sqrt{\mathbf{x}^T\mathbf{W}\mathbf{x}}$. $\chi_r \ \triangleq \{ \mathbf{A} \in \mathbb{R}^{n_1\times n_2}: \text{rank}(\mathbf{A})=r \}$ denotes the set of rank $r$ matrices. $\text{vec}(\mathbf{A})$ represents the column vector of concatenated $\mathbf{A}$'s columns, and ($\text{mat}_{n_1,n_2}$) is ($\text{vec}$)'s inverse operation to convert a vector to a matrix of size $n_1 \times n_2$. $\bigtriangledown_{\mathbf{X}}\{f\}$ denotes the partial derivative operation of the function $f$ on $\mathbf{X}$. p.s.d. is the short form for positive semidefinite.


\section{ALS for Low-Rank Matrix Reconstruction}
The alternating least-squares approach was developed in \cite{Zachariah-ALSforLR}. For an $r$-rank matrix $\mathbf{X}$ satisfying (\ref{eq:y=AX+e}) with noise covariance $\mathbf{C}$, the weighted least-squares estimator is
\begin{equation}\label{eq:main_X}
\hat{\mathbf{X}} = \arg\min_{ \footnotesize{\mathbf{X} \in \chi_r }}{ ||\mathbf{y}- \mathcal{A}(\mathbf{X})||^2_{\mathbf{C}^{-1}}}.
\end{equation}
To rewrite (\ref{eq:main_X}) in terms of the standard $2$-norm, the measurements and sensing operator can be prewhitened by forming $\bar{\mathbf{y}}= \mathbf{C}^{-1/2}\mathbf{y}$ and $\bar{\mathbf{A}} = \mathbf{C}^{-1/2} \mathbf{A}$. Expressing $\mathbf{X} = \mathbf{LR}$ where $\mathbf{L} \in \mathbb{R}^{n_1 \times r}$ and $\mathbf{R} \in \mathbb{R}^{r \times n_2}$, the square of residuals becomes
\begin{equation}\label{eq:J(LR)}
\begin{split}
\mathbf{J}(\mathbf{L},\mathbf{R}) &\triangleq  || \bar{\mathbf{y}} - \bar{\mathcal{A}}(\mathbf{LR}) ||^2_2 \\
&= || \bar{\mathbf{y}}- \bar{\mathbf{A}}(\mathbf{I}_{n_1} \otimes \mathbf{L}) \text{vec}(\mathbf{R}) ||_2^2 \\
&= || \bar{\mathbf{y}}- \bar{\mathbf{A}}(\mathbf{R}^T \otimes \mathbf{I}_{n_2}) \text{vec}(\mathbf{L}) ||_2^2.
\end{split}
\end{equation}
The cost function $\mathbf{J}(\mathbf{L},\mathbf{R})$ is minimized cyclically by
\begin{equation}
\begin{split}
\hat{\mathbf{R}}&= \arg\min_{\mathbf{R}} || \bar{\mathbf{y}}- \bar{\mathbf{A}}(\mathbf{I}_{n_1} \otimes \hat{\mathbf{L}}) \text{vec}(\mathbf{R}) ||_2^2, \\
\hat{\mathbf{L}}&= \arg\min_{\mathbf{L}} || \bar{\mathbf{y}}- \bar{\mathbf{A}}(\hat{\mathbf{R}}^T \otimes \mathbf{I}_{n_2}) \text{vec}(\mathbf{L}) ||_2^2.
\end{split}
\end{equation}
The iterations of $\hat{\mathbf{R}}$ and $\hat{\mathbf{L}}$ continue until the residual $||\bar{\mathbf{y}}- \bar{\mathcal{A}}(\mathbf{LR})||_2^2$ no longer decreases. Specifically, we calculate the analytical solution $\text{vec}(\hat{\mathbf{R}}) = [\bar{\mathbf{A}} (\mathbf{I}_{n_1} \otimes \mathbf{L}) ]^{\dagger} \bar{\mathbf{y}}$ given $\mathbf{L}$ and $\text{vec}(\hat{\mathbf{L}}) = [\bar{\mathbf{A}} (\mathbf{R}^T \otimes \mathbf{I}_{n_2}) ]^{\dagger} \bar{\mathbf{y}}$ given $\mathbf{R}$. ALS is also capable of recovering structured low-rank matrices such as Hankel, Toeplitz, as well as p.s.d. matrices. In this case a projection step $\hat{\mathbf{X}} \triangleq \mathcal{P}(\mathbf{L}\hat{\mathbf{R}})$ is added after updating $\hat{\mathbf{R}}$ using a ``lift and project" approach and a new $\bar{\mathbf{R}}$ is calculated by the least-squares estimation
\begin{equation}
\bar{\mathbf{R}}=\min_{\mathbf{R}}|| \mathbf{LR}- \hat{\mathbf{X}} ||_F^2.
\end{equation}
$\bar{\mathbf{L}}$ can be updated likewise. ALS has been verified effectively for recovering low-rank matrices of large sizes.


\section{Alternating Linear Estimator}

In this section we develop a simple but efficient algorithm for low rank matrices with linear structure, called the alternating linear estimator (ALE). We assume that the low rank matrix has linear structure which means that $\mathbf{X} \in \mathbb{R}^{n_1\times n_2}$ can be decomposed as
\begin{equation}
\mathbf{X} = \mathcal{S}_\chi (\mathbf{h})
\end{equation}
where $\mathbf{h} \in \mathbb{R}^p$ is a parametrization of $\mathbf{X}$ and $\mathcal{S}_\chi : \mathbb{R}^p \to \mathbb{R}^{n_1 \times n_2}$ is a linear map parametrizing the linear structure of $\mathbf{X}$. Taking  $\mathbf{X}$ as a Hankel matrix for instance, $\mathbf{h}$ can contain the first column and last row of $\mathbf{X}$. We denote the pseudoinverse of $\mathcal{S}_\chi$ by $\mathcal{T}_\chi$, i.e.
\begin{equation}
\mathcal{T}_\chi (\mathbf{X}) = \arg \min_\mathbf{h} || \mathbf{X} - \mathcal{S}_\chi (\mathbf{h})||_F^2.
\end{equation}
The idea of ALE is to iteratively update $\mathbf{L},\mathbf{R}$ and apply the lift-and-project (or composite mapping) method to project the matrix to its linear structure. As an initial least squares estimate we set
\begin{equation}\label{eq:h_0}
\mathbf{h}_0 = \arg \min_\mathbf{h} || \bar{\mathbf{y}} - \bar{\mathcal{A}}(\mathcal{S}_\chi ( \mathbf{h} )) ||_2^2.
\end{equation}
We then apply lift-and-project to the estimate $\mathbf{h}_0$. The details of the algorithm are summarized in Algorithm 1. \\
\begin{adjustwidth}{-0.1cm}{}
\begin{tabular}{l}
\hline
\textbf{Algorithm 1}: Alternating Linear Estimator \\
\hline \vspace{-3mm}\\
\textbf{Input}: sensing operator $\bar{\mathbf{A}}$, measurements $\bar{\mathbf{y}}$, rank $r$. \\
\textbf{Set}: residual bound $\epsilon$, max number of iterations $k_\text{max}$. \\
\textbf{Initialize}: $\mathcal{S}_\chi$, $\mathcal{T}_\chi$, $\mathbf{h}_0$\\
\textbf{Interations}: \ \ $k=0, 1, \cdots, k_{\text{max}}$\\
\ \ $\mathbf{L}_{k+1} = \left\{ \text{first $r$ columns of } \mathcal{S}_\chi (\mathbf{h}_k) \right\}$\\
\ \ (Or $\mathbf{L}_{k+1} = \arg \min_\mathbf{L} || \mathcal{S}_\chi (\mathbf{{h}}_k) - \mathbf{L} \mathbf{R}_k||_F^2 =\mathcal{S}_\chi (\mathbf{{h}}_k) \mathbf{R}_k^\dagger$ )\\
\ \ $\mathbf{R}_{k+1} = \arg \min_\mathbf{R} || \mathcal{S}_\chi (\mathbf{{h}}_k) - \mathbf{L}_{k+1} \mathbf{R}||_F^2 =\mathbf{L}_{k+1}^\dagger \mathcal{S}_\chi (\mathbf{{h}}_k)$\\
\ \ $\mathbf{h}_{k+1} = \mathcal{T}_{\chi} (\mathbf{L}_{k+1}\mathbf{R}_{k+1})$\\
\ \   $\backslash \backslash$ stop criterion  \\
\ \ If $||\mathcal{S}_\chi (\mathbf{{h}}_{k+1}) - \mathbf{L}_{k+1} \mathbf{R}_{k+1}||_\infty \leq \epsilon$, break; \\
\textbf{Output}: $\mathbf{X} = \mathbf{L}_{k+1}\mathbf{R}_{k+1}$. \\
\hline\\
\end{tabular}
\end{adjustwidth}

Since the algorithm only uses projections and mappings which can be performed iteratively, the algorithm is scalable and thus appropriate for large scale problems. 
Simulations show that ALE has a better performance better than ALS; see details in Section \ref{sec:Simulations}.

\section{Alternating Direction Least-Squares Estimator}
\subsection{ADMM Embedded to Alternating Iteration}
The ALS estimator leverages the low-rank constraints by forming $\mathbf{X}=\mathbf{LR}$ and updates the factors iteratively. However, in ALS the changing directions of $\mathbf{L},\mathbf{R}$ are not recorded. On the other hand, Algorithm 1 complies with (\ref{eq:h_0}) only at the initial step, yet in the iterations it satisfies the low rank and structured constraints alternatively. So the result might drift from the true solution since it does not impose equation (\ref{eq:y=AX+e}) directly in iterations. To exploit the direction information and overcome the drifting problem, we adopt the alternating direction augmented Lagrangian methods to strengthen the ALS algorithm \cite{Boyd-DistributedOpt}. The alternating direction method of multipliers (ADMM) is embedded in each iteration.
ADMM is widely used in various applications to solve convex optimization problems effectively and can be used to construct distributed optimization algorithms \cite{Boyd-DistributedOpt,Wahlberg-AnADMM}. To minimize $f(\mathbf{R})=|| \bar{\mathbf{y}}- \bar{\mathbf{A}}(\mathbf{I}_{n_1} \otimes \mathbf{L}) \text{vec}(\mathbf{R}) ||_2^2$ and $g(\mathbf{Z})=\mu ||\mathbf{LZ}-\hat{\mathbf{X}}||^2_{F}$ where $\hat{\mathbf{X}}$ denotes the projection result on the structure $\chi$ (like Hankel) and $\mu$ is a parameter to balance the weights of $f$ and $g$, the problem can be rewritten as
\begin{equation}\label{eq:ADMM_f_I}
\begin{split}
\text{minimize}\ f(\mathbf{R})+ g(\mathbf{Z})  \\
\text{subject to}\ \mathbf{R}=\mathbf{Z},
\end{split}
\end{equation}
where $f$ and $g$ pursue solutions within measurement and low-rank constraints, respectively. To robustify the algorithm, an augmented Lagrangian is introduced. Combining the linear and quadratic terms, it becomes
\begin{equation}
L_{\lambda}(\mathbf{R},\mathbf{Z},\mathbf{U})= f(\mathbf{R})+g(\mathbf{Z}) + \lambda||\mathbf{R}-\mathbf{Z}+\mathbf{U}||^2_2,
\end{equation}
where $\lambda>0$ is a penalty parameter and $\mathbf{U}$ is a scaled dual variable associated with the constraint $\mathbf{R}=\mathbf{Z}$. The iteration of updating the left matrix can be formulated by minimizing the augmented Lagrangian over $\mathbf{R}$ and $\mathbf{Z}$ cyclically:
\begin{eqnarray}\label{eq:ADMM_XZU}
\mathbf{R}_{k+1} &= \arg\min_\mathbf{R} L_{\lambda}(\mathbf{R},\mathbf{Z}_k,\mathbf{U}_k) \ \ \ \ \  // \ \mathbf{R}\text{-min}\label{eq:ADMM_R}\\
\mathbf{Z}_{k+1} &= \arg\min_\mathbf{Z} L_{\lambda}(\mathbf{R}_{k+1},\mathbf{Z}_k,\mathbf{U}_k) \  // \ \mathbf{Z}\text{-min} \label{eq:ADMM_Z}\\
\mathbf{U}_{k+1} &= \mathbf{U}_{k} + \lambda(\mathbf{R}_{k+1} - \mathbf{Z}_{k+1}) \ \label{eq:ADMM_U}// \ \text{dual-update}
\end{eqnarray}
Firstly $\mathbf{Z}_k$ and $\mathbf{U}_k$ are fixed and we minimize the augmented Lagrangian over $\mathbf{R}$. The second step is to pursue a solution to $\mathbf{Z}$ under the constraints on $\mathbf{X}$. At last the dual variable $\mathbf{U}_k$ is updated in the third step. The algorithm runs iteratively until primal and dual residuals become smaller than the terminating bounds $||\mathbf{R}_k- \mathbf{Z}_k||_2 \leq \epsilon^{\text{pri}}, ||\lambda (\mathbf{Z}_k- \mathbf{Z}_{k-1})||_2 \leq \epsilon^{\text{dual}}$. $\mathbf{L}$ can be updated in a similar manner. Though $f,g$ are simple quadratic functions, the ADMM approach benefits from the use of updating directions of previous iterations, and it is capable of pursuing solutions satisfying constraints of measurements, low-rank and structured \cite{Boyd-DistributedOpt,Wahlberg-AnADMM} at the same time.

\subsection{Alternating Direction Least-Squares}
The ADLS algorithm combines the features of the ALE and the ALS algorithm, and embeds ADMM loops to update $\mathbf{L},\mathbf{R}$, respectively.
Because functions $f$ and $g$ are convex and quadratic, the augmented Lagrangian over $\mathbf{R},\mathbf{Z}, \mathbf{U}$ can be minimized explicitly (see Appendix). In particular $\mathbf{R}$ in (\ref{eq:ADMM_R}) becomes
\begin{equation}\label{eq:update_R}
\text{vec}(\mathbf{R}_{k+1}) = \left( \mathbf{P}_{k}^T{\mathbf{P}_{k}} + \lambda \mathbf{I} \right)^{-1} \left( \mathbf{P}_{k}^T \bar{\mathbf{y}} +\lambda(\mathbf{z}_{k}-\mathbf{u}_k) \right),
\end{equation}
where the lowercase letters represent the vectorized matrices, e.g. $\mathbf{z}=\text{vec}(\mathbf{Z})$; $\mathbf{P}_{k} = \bar{\mathbf{A}}(\mathbf{I}_{n_1} \otimes \mathbf{S}_{k})$ and $k$ denotes the index of iterations; $\mathbf{S}$ is the dual of $\mathbf{L}$ and $\mathbf{S}_0=\mathbf{L}_0$. For least squares $g(\mathbf{Z})$ the $\mathbf{Z}\text{-min}$ step in (\ref{eq:ADMM_Z}) is equal to
 \begin{equation}\label{eq:update_Z}
\mathbf{Z}_{k+1}=\left( \mu \mathbf{S}_{k}^T{\mathbf{S}_{k}} + \lambda \mathbf{I} \right)^{-1} \left( \mu \mathbf{S}_{k}^T \hat{\mathbf{X}}_{k} +\lambda(\mathbf{R}_{k}+\mathbf{U}_k) \right).
\end{equation}
In the process of using ADMM to update the left matrix $\mathbf{L}$:
\begin{equation}\label{eq:update_L}
\text{vec}(\mathbf{L}_{k+1}) = \left( \mathbf{Q}_{k}^T{\mathbf{Q}_{k}} + \lambda \mathbf{I} \right)^{-1} \left( \mathbf{Q}_{k}^T \bar{\mathbf{y}} +\lambda(\mathbf{S}_{k}-\mathbf{T}_k) \right),
\end{equation}
where $\mathbf{Q}_{k} = \bar{\mathbf{A}}( \mathbf{Z}_{k+1}^T \otimes \mathbf{I}_{n_2})$, and
 \begin{equation}\label{eq:update_S}
\mathbf{S}_{k+1} = \left(\mu \hat{\mathbf{X}}_k\mathbf{Z}_{k+1}^T+\lambda(\mathbf{L}_{k+1}+\mathbf{T}_k)\right)\left(\mu \mathbf{Z}_{k+1}\mathbf{Z}_{k+1}^T+\lambda \mathbf{I}\right)^{-1}.
\end{equation}
Similar to ALE,  the structured $\hat{\mathbf{X}}$ is obtained in advance by projecting $\mathbf{SZ}$ to the constraints set $\chi$,
\begin{equation}\label{eq:update_X}
\hat{\mathbf{X}}_{k+1} \triangleq \mathcal{P}_\chi(\mathbf{S}_{k+1}\mathbf{Z}_{k+1}),
\end{equation}
where $\mathcal{P}_\chi(.)= \mathcal{S}_\chi(\mathcal{T}_\chi(.))$. Assuming that the low rank $\mathbf{X}$ belongs to the set of Hankel matrices $\chi$, the pseudo-code of our algorithm ADLS is presented in Algorithm 2. \\
\begin{adjustwidth}{-0.1cm}{}
\begin{tabular}{l}
\hline
\textbf{Algorithm 2}: Alternating Direction Least-Squares \\
\hline \vspace{-3mm}\\
\textbf{Input}: sensing operator $\bar{\mathbf{A}}$, measurements $\bar{\mathbf{y}}$, rank $r$. \\
\textbf{Set}: residual bound $\epsilon$, weight $\mu$, tunable parameters $\lambda, \lambda'$, \\max number of iterations $k_\text{max}$. \\
\textbf{Initialize}: Perform one iteration of Algorithm 1 to obtain\\
 $\mathbf{h}_0,\mathbf{L}_0, \mathbf{R}_0$, and $\hat{\mathbf{X}}_1=\mathbf{L}_0  \mathbf{R}_0$. $\mathbf{S}_{0}=\mathbf{L}_{0}$, $\mathbf{U}_{0}=\mathbf{0}$, $\mathbf{Z}_{0}=\mathbf{R}_{0}$,\\
  $\mathbf{T}_{0}=\mathbf{0}$.\\
\textbf{Iteration}: for $k = 0,1, \cdots$, $k_\text{max}$ \ \ do \\
$\backslash \backslash$ inner loop to calculate the right matrix  \\
\ \ $\mathbf{R}_{k+1}$ is updated by (\ref{eq:update_R}); \\
\ \ $\mathbf{Z}_{k+1}$  is updated by (\ref{eq:update_Z}); \\
\ \ $\mathbf{U}_{k+1} = \mathbf{U}_{k}+ \lambda'(\mathbf{R}_{k+1} - \mathbf{Z}_{k+1})$;\\
$\backslash \backslash$ inner loop to calculate the left matrix  \\
\ \ $\mathbf{L}_{k+1}$ is updated by (\ref{eq:update_L}) ; \\
\ \ $\mathbf{S}_{k+1}$  is updated by (\ref{eq:update_S}); \\
\ \ $\mathbf{T}_{k+1} = \mathbf{T}_{k}+ \lambda'(\mathbf{L}_{k+1} - \mathbf{S}_{k+1})$;\\
$\backslash \backslash$ update $\hat{\mathbf{X}}$ with constraints  \\
\ \ $\mathbf{h}_{k+1} = \mathcal{T}_{\chi} (\mathbf{S}_{k+1}\mathbf{Z}_{k+1})$; \\
\ \ $\hat{\mathbf{X}}_{k+1}= \mathcal{S}_{\chi}(\mathbf{h}_{k+1})$; \\
$\backslash \backslash$ stop criterion  \\
\ If $||\mathcal{S}_\chi (\mathbf{{h}}_{k+1}) - \mathbf{S}_{k+1} \mathbf{Z}_{k+1}||_\infty \leq \epsilon$, break; \\
\textbf{Output}: $\mathbf{X} = \mathbf{S}_{k+1}\mathbf{Z}_{k+1}$. \\
\hline\\
\end{tabular}
\end{adjustwidth}

Algorithm 2 follows the steps to calculate $\mathbf{R},\mathbf{L}$ and $\hat{\mathbf{X}}$ alternately. So it also belongs to the alternating strategies. In contrast to ALS or ALE, in ADLS each step is derived from the analytical solution to its augmented Lagrangian. Compared to ALS, Algorithm 2 uses the projection only once in each iteration, and it is able to achieve a balance between the measurement constraints by function $f$ and the $\chi$ constraints by function $g$ by adjusting $\mu$. Meanwhile compared to ALE, ADLS also incorporates function $f$ in the updates so that it prevents the scenario that the output of $\mathbf{SZ}$ drifts away from the true solution to (\ref{eq:main_X}).

 \subsection{Algorithm Analysis}

 In ADLS, $\mathbf{L}$ and $\mathbf{R}$ are updated within an inner loop subject to the constraints rather than calculated directly in ALS. So the algorithm is strengthened in efficiency and robustness. When the constraint set $\chi$ denotes the low rank and Hankel set, ADLS can be initialized by the parametrization $\mathbf{h}_0$ as shown in Algorithm 2, as with any other structure which can be decomposed linearly. Otherwise ADLS is initialized by the singular value decomposition (SVD): $\mathbf{U}  \mathbf{\Sigma}  {\mathbf{V}}^T = \text{mat}_{n_1,n_2}(\bar{\mathbf{A}}^T \bar{\mathbf{y}})$, where $\mathbf{L}_{0}=\mathbf{U} \sqrt{\mathbf{\Sigma }_r}$ and $\mathbf{R}_0= \sqrt{\mathbf{\Sigma }_r} {\mathbf{V}}^T$ are rescaled with the square root of singular values to balance the norms of sub-matrices, and $\mathbf{\Sigma}_r$ is the $\mathbf{\Sigma}$ truncated to the $r$th singular value. Since the SVD is used only once for the initialization, the proposed algorithm is computationally superior to other algorithms that are based on SVD.

 For convex problems it was proven that the residuals and the cost function converge to zero and an optimal value respectively as ADMM proceeds under mild assumptions, and the rate of convergence is determined by the choices of $\lambda, \lambda'$ \cite{Boyd-DistributedOpt}. In ADLS, $\lambda, \lambda'$ need to be tuned carefully. Ideas from \cite{Boyd-DistributedOpt} may be adopted as guidances to tune parameters. In practice, we set $\lambda, \lambda'\in [0,1]$ based on numerical experiments. ADLS can also be extended to some special cases of nonconvex $\chi$ such as cardinality or boolean constraints, in which $\mathcal{P}_{\chi}$ in (\ref{eq:update_X}) should be changed accordingly.

\section{Numerical Experiments}

\subsection{Performance Measure and Cram\'{e}r-Rao Bounds}

Extensive simulations have been performed using MATLAB to test the recovery performances of unstructured or structured low-rank matrices contaminated with noise. 
Similar to settings in \cite{Zachariah-ALSforLR}, a Hankel matrix $\mathbf{X}$ is generated randomly by creating a matrix with elements from an i.i.d. $\mathcal{N}(0,1)$ and fitting $\mathbf{h}$ using Prony's method \cite{Hayes-StatisticalDig}. We use the signal-to-reconstruction error ratio (SRER), or namely the inverse of the normalized mean square error (NMSE) to measure the reconstruction result
\begin{equation}
    \text{SRER}= \frac{1}{\text{NMSE}} \triangleq \frac{E\left[ ||\mathbf{X}||_F^2 \right]}{E\left[ ||\mathbf{X}-\hat{\mathbf{X}}||_F^2\right]}
\end{equation}
versus increasing signal to measurement noise ratio (SMNR), $\text{SMNR}=E{\left[ ||\mathbf{X}||_F^2\right]}/ E\left[ ||\mathbf{e}||_F^2\right]$ where $\mathbf{e} \sim \mathcal{N}(\mathbf{0}, \sigma^2 \mathbf{I})$ is the noise. The results are also compared to the Cram\'{e}r-Rao bound (CRB) defined for unbiased low-rank matrix estimators
\begin{equation}
\text{CRB}(\mathbf{X}) \leq E_{y|X}{\left[ ||\mathbf{X}-\hat{\mathbf{X}}(\mathbf{y})||_F^2 \right]}.
\end{equation}
The expressions of CRB for unstructured or structured low rank matrices are derived in \cite{Tang-LowerBounds, Werner-ReducedRank,Zachariah-ALSforLR}.

\begin{figure*}[t]
   \centering
   \begin{minipage}[t]{0.49\linewidth}
   \centering
  \includegraphics[width=8cm]{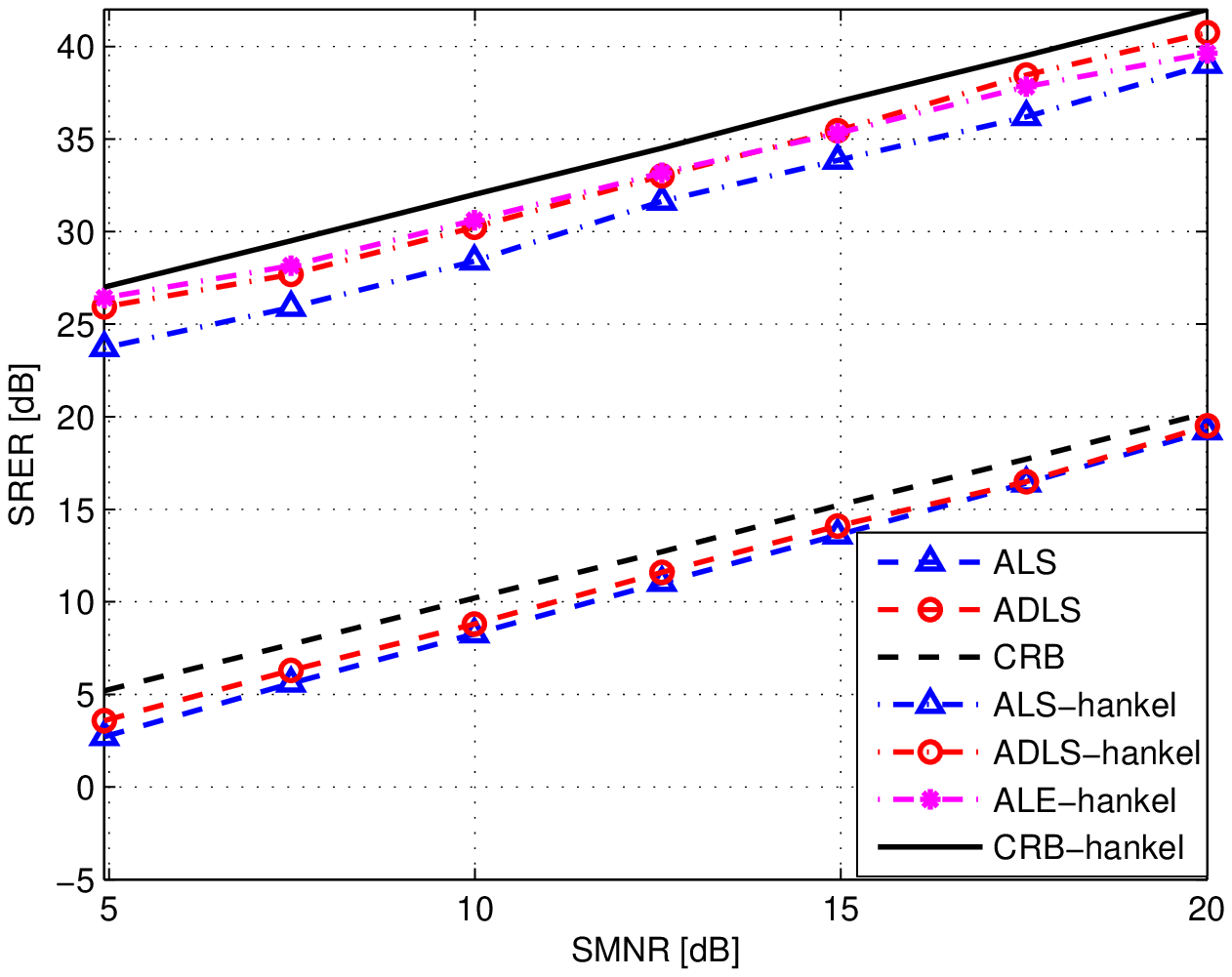} 
\centerline{(a)} 
   \end{minipage}
   \begin{minipage}[t]{0.49\linewidth}
   \centering
   \includegraphics[width=8cm]{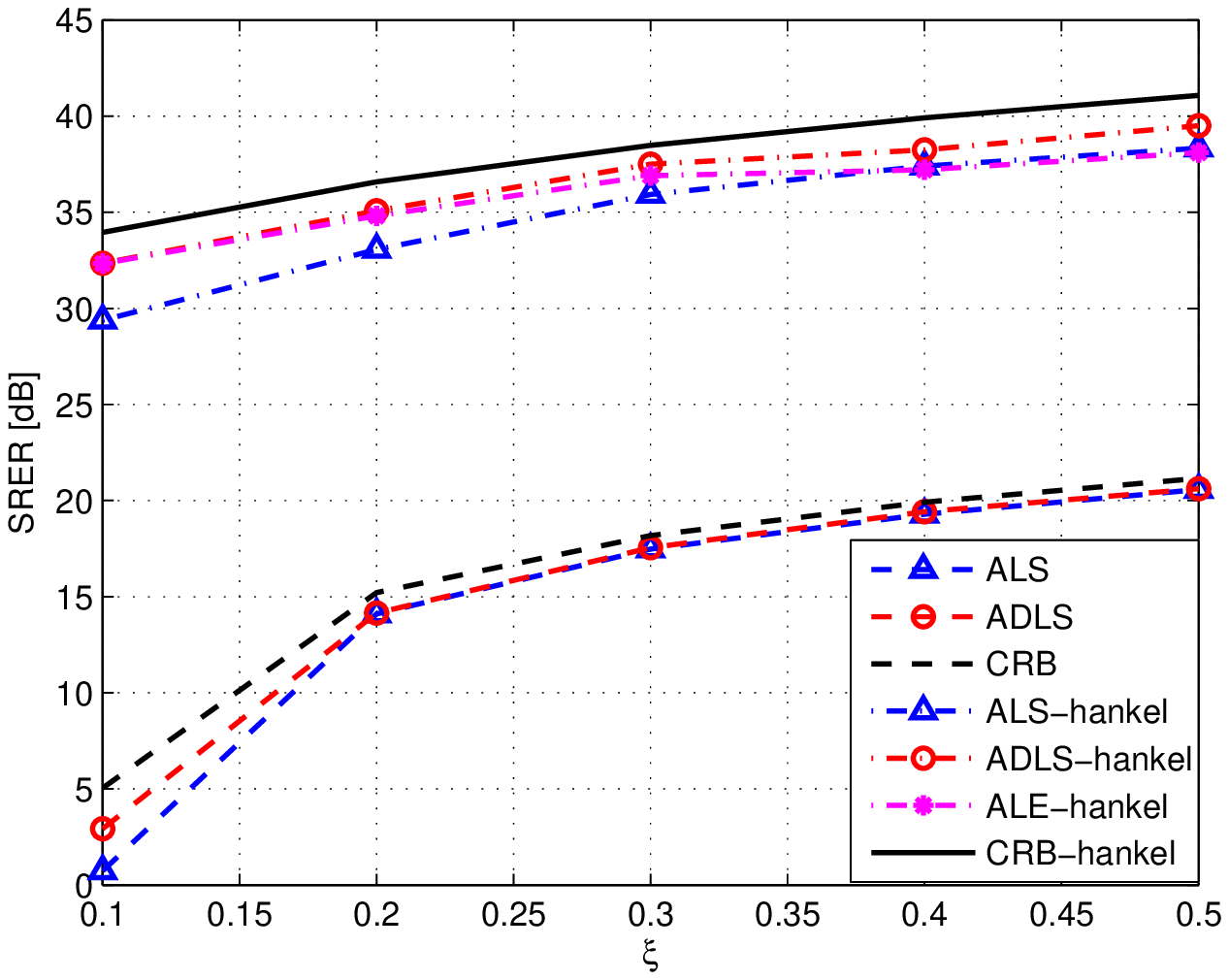} 
\centerline{(b)} 
   \end{minipage}
  \caption{Reconstruction comparison of ALS, ALE, ADLS, and CRB for unstructured and Hankel matrices, $n_1=n_2=80, \xi=0.2, r=4$ (a) SMNR versus SRER for $\xi=0.2$; (b) $\xi$ versus SRER for $\text{SMNR}=15\ \text{dB}$.}\label{fig:D-ALS_vs_ALS1}
\end{figure*}

\subsection{Simulation results}\label{sec:Simulations}
The reconstruction performances of low-rank matrices $\mathbf{X}$ with and without the prior knowledge of the Hankel structure are compared in Fig. \ref{fig:D-ALS_vs_ALS1}. Without the prior knowledge, the performance of ADLS is drawn along with the curves of the ALS and the Cram\'{e}r-Rao Bounds. While for Hankel $\mathbf{X}$, ALE curve is also included for comparison. Each data point is obtained by repeating $500$ Monte Carlo simulations, and for each iteration a new realization of $\mathbf{X}$, sensing function $\mathcal{A}$ (or $\mathbf{A}$) and $\mathbf{e}$ are generated correspondingly.

In Fig. \ref{fig:D-ALS_vs_ALS1} (a) we increase the SMNR with a fixed sampling fraction and observe their SRER. The sampling fraction $\xi = m/(n_1n_2)=0.2$, $\mathbf{X}$ has size $80 \times 80$ and rank $r=4$. The reconstruction gain is increased significantly when the matrix structure is taken into consideration. In detail, 
the performances of both ALE and ADLS are shown to be $2.5$ and $0.5$ dB better than the results of ALS with respect to Hankel structured and unstructured matrices, respectively. In particular, the curve of ADLS is close to the ALE curve when the SMNR is low, but reveals a better performance in terms of SRER when the SMNR comes more than $15$dB. In Fig. \ref{fig:D-ALS_vs_ALS1} (b) we fix $\text{SMNR}=15$ dB and let the sampling rate increase from $0.1$ to $0.5$, while keeping $n_1=n_2=80, r=4$ the same as (a). Fig. \ref{fig:D-ALS_vs_ALS1} (b) also indicates the superiority of ADLS/ALE over ALS. In addition, ADLS has a better recovery than ALE especially when $\xi>0.2$.

\section{Conclusion}
In this paper two novel algorithms are developed to strengthened the classical ALS algorithm by better using the alternating techniques for reconstruction of low-rank matrices. 
The algorithms are capable of recovering low-rank matrices in the general underdetermined setup as well as for structured matrices such as Hankel matrices. Simulations indicated that the proposed algorithms achieve better performances that are closer to the Cram\'{e}r-Rao bound.


\appendix

\begin{prop}
The updates of $\mathbf{R}$ and $\mathbf{Z}$ can be realized as (\ref{eq:update_R}) and (\ref{eq:update_Z}), respectively.
\end{prop}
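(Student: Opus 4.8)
The plan is to treat each subproblem in (\ref{eq:ADMM_R}) and (\ref{eq:ADMM_Z}) as an unconstrained strongly convex quadratic minimization and to obtain its unique minimizer from the first-order optimality condition. Since $f$, $g$, and the augmented penalty are all quadratic, each update has a closed form found by setting the relevant gradient to zero and solving a linear system whose coefficient (Gram) matrix is positive definite for $\lambda>0$. I would present the $\mathbf{R}$-update in vectorized form, because $f$ is naturally expressed through the Kronecker-structured operator $\mathbf{P}_k=\bar{\mathbf{A}}(\mathbf{I}_{n_1}\otimes\mathbf{S}_k)$, and the $\mathbf{Z}$-update in matrix form, which is the more convenient representation for $g$.

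For the $\mathbf{R}$-min step, I would first note that with $\mathbf{Z}_k,\mathbf{U}_k$ held fixed the term $g$ is constant, so minimizing $L_\lambda$ over $\mathbf{R}$ reduces to minimizing $f(\mathbf{R})+\lambda\|\mathbf{R}-\mathbf{Z}_k+\mathbf{U}_k\|_F^2$. Writing $\mathbf{r}=\text{vec}(\mathbf{R})$ and using that the Frobenius norm of a matrix equals the Euclidean norm of its vectorization, this becomes $\|\bar{\mathbf{y}}-\mathbf{P}_k\mathbf{r}\|_2^2+\lambda\|\mathbf{r}-\mathbf{z}_k+\mathbf{u}_k\|_2^2$. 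Differentiating with respect to $\mathbf{r}$ and setting $\bigtriangledown_{\mathbf{r}}\{\cdot\}=\mathbf{0}$ gives the normal equations $(\mathbf{P}_k^T\mathbf{P}_k+\lambda\mathbf{I})\mathbf{r}=\mathbf{P}_k^T\bar{\mathbf{y}}+\lambda(\mathbf{z}_k-\mathbf{u}_k)$; since $\lambda>0$ the coefficient matrix is positive definite and hence invertible, which yields (\ref{eq:update_R}).

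For the $\mathbf{Z}$-min step, I would observe that $f$ does not depend on $\mathbf{Z}$, so the minimization reduces to $\mu\|\mathbf{S}_k\mathbf{Z}-\hat{\mathbf{X}}_k\|_F^2+\lambda\|\mathbf{R}_k-\mathbf{Z}+\mathbf{U}_k\|_F^2$. Keeping matrix form and using $\bigtriangledown_{\mathbf{Z}}\{\|\mathbf{S}\mathbf{Z}-\mathbf{B}\|_F^2\}=2\mathbf{S}^T(\mathbf{S}\mathbf{Z}-\mathbf{B})$, setting the gradient to zero gives $(\mu\mathbf{S}_k^T\mathbf{S}_k+\lambda\mathbf{I})\mathbf{Z}=\mu\mathbf{S}_k^T\hat{\mathbf{X}}_k+\lambda(\mathbf{R}_k+\mathbf{U}_k)$, and inverting the (again positive definite) coefficient matrix produces (\ref{eq:update_Z}).

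I do not expect a substantive obstacle: both subproblems are strongly convex, so each stationary point is automatically the unique global minimizer, and no second-order check or constraint handling is needed. The only points requiring care are bookkeeping ones, namely correctly identifying which terms are constant in each subproblem, switching consistently between the vectorized representation used for $f$ and the matrix representation used for $g$ (which is legitimate precisely because $\|\cdot\|_F$ of a matrix equals $\|\cdot\|_2$ of its vectorization), and invoking $\lambda>0$ to guarantee invertibility of the two Gram matrices.
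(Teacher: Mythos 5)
Your proposal is correct and follows essentially the same route as the paper: both set the gradient of the augmented Lagrangian (with the terms not depending on the active variable dropped) to zero and solve the resulting normal equations, using the vectorized form with $\mathbf{P}_k$ for the $\mathbf{R}$-step and the matrix form with $\mathbf{S}_k$ for the $\mathbf{Z}$-step. Your added remarks on strong convexity and the positive definiteness of the coefficient matrices for $\lambda>0$ are sound refinements that the paper leaves implicit.
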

\begin{proof}
Because $f$ and $g$ are differentiable, the \emph{primal feasibility} of $\mathbf{R}-\mathbf{Z}=0$ can be substituted with the \emph{dual feasibility}: $\bigtriangledown_{\mathbf{R}} L_{\lambda}(\mathbf{R},\mathbf{Z},\mathbf{U})=0$ and $\bigtriangledown_{\mathbf{Z}} L_{\lambda}(\mathbf{R},\mathbf{Z},\mathbf{U})=0$. Specifically,
\begin{equation}
\begin{split}
0&= \bigtriangledown_{\mathbf{R}}\left\{f(\mathbf{R})+\lambda||\mathbf{R}-\mathbf{Z}+\mathbf{U}||_2^2\right\} \\
&=2\mathbf{P}^T\mathbf{P}\text{vec}(\mathbf{R})-2\mathbf{P}^T\bar{\mathbf{y}}+2\lambda\text{vec}(\mathbf{R-\mathbf{Z}+\mathbf{U}}).
\end{split}
\end{equation}
Move terms containing $\mathbf{R}$ to one side of the equation,
\begin{equation}
\left(\mathbf{P}^T\mathbf{P}+\lambda \mathbf{I}\right)\text{vec}(\mathbf{R})= \mathbf{P}^T \bar{\mathbf{y}} +\lambda\text{vec}(\mathbf{Z}-\mathbf{U}),
\end{equation}
from which (\ref{eq:update_R}) can be derived straightforwardly. For the $\mathbf{Z}\text{-min}$ step,
\begin{equation}
\begin{split}
0&= \bigtriangledown_{\mathbf{Z}}\left\{g(\mathbf{Z})+\lambda||\mathbf{R}-\mathbf{Z}+\mathbf{U}||_2^2\right\} \\
&=\mu 2\mathbf{S}^T \left( \mathbf{S}\mathbf{Z}-\hat{\mathbf{X}} \right)-2\lambda(\mathbf{R-\mathbf{Z}+\mathbf{U}}).
\end{split}
\end{equation}
Likewise, move terms containing $\mathbf{Z}$ to one side of the equation,
\begin{equation}
\left(\mu \mathbf{S}^T\mathbf{S}+\lambda \mathbf{I}\right)\mathbf{Z}= \mu \mathbf{S}^T \hat{\mathbf{X}} +\lambda(\mathbf{R}+\mathbf{U}),
\end{equation}
from which (\ref{eq:update_Z}) can be derived.
\end{proof}
 The expressions of $\mathbf{L}$ and $\mathbf{S}$ in (\ref{eq:update_L}) and (\ref{eq:update_S}) can be obtained in a similar way.

\end{document}